\def\vbar{\mathchoice{\vrule height6.3ptdepth-.5ptwidth.8pt\kern- .8pt}
{\vrule height6.3ptdepth-.5ptwidth.8pt\kern-.8pt} {\vrule
height4.1ptdepth-.35ptwidth.6pt\kern-.6pt} {\vrule
height3.1ptdepth-.25ptwidth.5pt\kern-.5pt}}
\newtheorem{theorem}{Theorem}[section]
\newtheorem{definition}[theorem]{Definition}
\newtheorem{lemma}[theorem]{Lemma}
\newtheorem{corollary}[theorem]{Corollary}
\newtheorem{remark}[theorem]{Remark}
\numberwithin{equation}{section}
\begin{document}
\title{Transposed Poisson structures on Schr\"{o}dinger algebra in $(n+1)$-dimensional space-time}
\author{
Yang Yang$^a$ , \ Xiaomin Tang$^{a,}$\footnote{Corresponding author, E-mail: x.m.tang@163.com},\ \ Abror Khudoyberdiyev$^b$
\\
{\small $^a$ School of Mathematical Science, Heilongjiang University, Harbin, 150080, P. R. China,}
\\ {\small $^b$ Institute of Mathematics Uzbekistan Academy of
Sciences, National University of Uzbekistan}
}
\date{}

\maketitle

\begin{abstract}
Transposed Poisson   structures on the Schr\"{o}dinger algebra in $(n+1)$-dimensional space-time of Schr\"{o}dinger Lie groups are described.
 It was proven that the Schr\"{o}dinger algebra $\mathcal{S}_{n}$ in case of $n\neq 2$ does not have non-trivial $\frac{1}{2}$-derivations  and as it follows it does not admit non-trivial transposed Poisson structures.  All $\frac{1}{2}$-derivations and transposed Poisson structures for the algebra $\mathcal{S}_{2}$ are obtained.
 Also, we proved that the Schr\"{o}dinger algebra $\mathcal{S}_{2}$ admits a non-trivial ${\rm Hom}$-Lie structure.

\end{abstract}

\textbf{Key words}: Schr\"{o}dinger algebra; $\frac 12$-derivation; transposed Poisson algebra.

\textbf{Mathematics Subject Classification}: 17A30, 17B40, 17B63.

\section{Introduction}\ \ \ \ \ \ \ \ \

Poisson algebras arose from the study of Poisson geometry in the 1970s and have appeared in an extremely wide range of areas in mathematics and physics, such as Poisson manifolds, algebraic geometry, operads, quantization theory, quantum groups, and classical and quantum mechanics. The study of Poisson algebras also led to other algebraic structures, such as noncommutative Poisson algebras \cite{x1}, generic Poisson algebras \cite{ksu,kms}, Poisson bialgebras\cite{lbs,nb}, algebras of Jordan brackets and generalized Poisson algebras \cite{ck1,ck2,k,ki,z}, Gerstenhaber algebras \cite{g}, $F$-manifold algebras \cite{d}, Novikov-Poisson algebras \cite{x2}, quasi-Poisson algebras \cite{b}, double Poisson algebras \cite{v}, Poisson $n$-Lie algebra \cite{ck3}, etc. Recently, a dual notion of the Poisson algebra(transposed Poisson algebra), by exchanging the roles of the two binary operations in the Leibniz rule defining the Poisson algebra, has been introduced in the paper \cite{bbgw} of Bai, Bai, Guo, and Wu. They have shown that the transposed Poisson algebra defined in this way not only shares common properties with the Poisson algebra, including the closure undertaking tensor products and the Koszul self-duality as operad, but also admits a rich class of identities. More recently, a relation between $\frac 12$-derivations of Lie algebras and transposed Poisson algebras has not only been established \cite{fml}, but also between $\frac12$-biderivations and transposed Poisson algebras \cite{yh}. These ideas were used for describing all transposed Poisson structures on the Witt algebra which was one of the first examples of non-trivial transposed Poisson algebras \cite{fml}, the Virasoro algebra \cite{fml}, the algebra $\mathcal W(a,b)$ \cite{fml}, the thin Lie algebra \cite{fml}, the twisted Heisenberg-Virasoro algebra \cite{yh}, the Schr\"{o}dinger-Virasoro algebra \cite{yh}, the extended Schr\"{o}dinger-Virasoro algebra \cite{yh}, the 3-dimensional Heisenberg Lie algebra \cite{yh}, Block Lie algebras and superalgebras \cite{kk2}, Witt type algebras \cite{kk3}, oscillator Lie algebras \cite{bfk} and Galilean and solvable Lie algebras \cite{kl}. A list of actual open questions on transposed Poisson algebras was given in \cite{bfk}.

The Schr\"{o}dinger Lie group describes symmetries of the free particle Schr\"{o}dinger equation, see \cite{per}. For any positive integer $n$, the Lie algebra $\mathcal{S}_{n}$ in $(n+1)$-dimensional space-time of the Schr\"{o}dinger Lie group is called the Schr\"{o}dinger algebra, see \cite{Dob1997}. The Schr\"{o}dinger algebra $\mathcal{S}_{n}$ is a non-semisimple Lie algebra and also plays an important role in theoretical physics. Recently there was a series of papers on studying the structure and representation theory of the Schr\"{o}dinger algebra $\mathcal{S}_{1}$ in $(1+1)$-dimensional space-time, see \cite{AD,wq1,Dob1997,ZC,ty,wq2}. Representations and (bi)derivations over $\mathcal{S}_{n}$ are studied in \cite{Liu,7}.

Now let us recall the definition of the Schr\"{o}dinger algebra $\mathcal{S}_{n}$ in $(n+1)$-dimensional space-time by \cite{Dob1997,Liu} as follows. Throughout this paper, we denote by $\mathbb{C}$ the set of all complex numbers. More precisely, we have
\begin{definition}\label{d1}
The Schr\"{o}dinger algebra $\mathcal{S}_{n}$ is a Lie algebra with a $\mathbb{C}$-basis
$$
\{e,f,h,z,x_{i},y_{i},s_{jk}(=-s_{kj})\mid1\leqslant i\leqslant n,1\leqslant j<k\leqslant n\}
$$
equipped with the following non-trivial commutation relations
\begin{eqnarray*}
&& [e,f]=h,\quad[h,e]=2e,\quad [f,h]=2f,\\
&& [x_{i}, y_{i}]=z, \quad [h,x_{i}]=x_{i}, \quad [h, y_{i}]=-y_{i} \\
&& [e,y_{i}]=x_{i}, \quad [f,x_{i}]=y_{i},\\
&&  [s_{jk},x_{i}]=\delta_{ki}x_{j}-\delta_{ji}x_{k}, \quad [s_{jk},y_{i}]=\delta_{ki}y_{j}-\delta_{ji}y_{k},\\
&& [s_{jk},s_{lm}]=\delta_{lk}s_{jm}+\delta_{jm}s_{kl}+\delta_{mk}s_{lj}+\delta_{lj}s_{mk},
\end{eqnarray*}
where $\delta$ is the Kronecker Delta defined as $1$ for $i=j$ and as $0$ otherwise.
\end{definition}

%\begin{remark}\label{rmk1}
 The Schr{\"o}dinger algebra $\mathcal{S}_{n}$ is a finite-dimensional, non-semisimple and non-solvable Lie algebra, and it is the semidirect product Lie algebra
$$\mathcal{S}_{n}=(\mathfrak{sl}_{2}\oplus\mathfrak{so}_{n})\ltimes\mathfrak{h}_{n}$$
where $\mathfrak{sl}_{2}={\rm Span}_{\mathbb{C}}\{e,f,h\}$ is the 3-dimensional simple Lie algebra, $\mathfrak{so}_{n}={\rm Span}_{\mathbb{C}}\{s_{kl}\mid1\leqslant k<l\leqslant n\}$ is the orthogonal Lie algebra and $\mathfrak{h}_{n}={\rm Span}_{\mathbb{C}}\{z,x_{i},y_{i}\mid1\leqslant i\leqslant n\}$ is the Heisenberg Lie algebra, for details refer to \cite{Liu}.  In particular, since the Schr{\"o}dinger algebra $\mathcal{S}_{1}$ does not have any term as $s_{jk}$ and so that it is the following form.
%\end{remark}
\begin{remark}\label{rmk1}
The Schr{\"o}dinger algebra $\mathcal{S}_{1}$ is a Lie algebra with a $\mathbb{C}$-basis  $\{e,f,h,z,x_{1},y_{1}\}$
equipped with the following non-trivial commutation relations
\begin{eqnarray*}
&& [e,f]=h,\quad[h,e]=2e,\quad [f,h]=2f,\\
&& [x_{1}, y_{1}]=z, \quad [h,x_{1}]=x_{1}, \quad [h, y_{1}]=-y_{1}, \\
&& [e,y_{1}]=x_{1}, \quad [f,x_{1}]=y_{1}.
\end{eqnarray*}
\end{remark}
In addition, the Schr{\"o}dinger algebra $\mathcal{S}_{2}$ will be considered separately, and we highlight his structure separately as follows.
\begin{remark}\label{rmk2}
The Schr{\"o}dinger algebra $\mathcal{S}_{2}$ is a Lie algebra with a $\mathbb{C}$-basis
$
\{f, h, e, z, x_1, y_1, x_2, y_2, s_{12}\}
$
equipped with the following non-trivial commutation relations
\begin{eqnarray*}
&& [e,f]=h,\quad  [h,e]=2e, \quad [f,h]=2f, \\
&& [x_i, y_i] = z,\quad  [h, x_i] = x_i,\quad  [h, y_i] = -y_i, \\
&& [e, y_i] = x_i, \quad [f, x_i] = y_i, \\
&& [s_{12}, x_1] = -x_2, \quad [s_{12}, x_2] = x_1, \quad [s_{12}, y_1] = -y_2, \quad [s_{12}, y_2] = y_1.
\end{eqnarray*}
\end{remark}
The following lemma is useful and it can be directly verified.
\begin{lemma}\label{grad111}
The Schr{\"o}dinger algebra $\mathcal{S}_{n}$ is a finitely generated $\mathbb{Z}_{2}$-graded Lie algebra as follows
\begin{equation}\label{Grad}
\mathcal{S}_{n}=(\mathcal{S}_{n})_{\bar{0}}\bigoplus (\mathcal{S}_{n})_{\bar{1}},
\end{equation}
where
\begin{equation}\label{Grad1}
 (\mathcal{S}_{n})_{\bar{0}}={\rm Span}_{\mathbb{C}}\{e,f,h,z,s_{kl}\mid1\leqslant k<l\leqslant n\},\quad
(\mathcal{S}_{n})_{\bar{1}}={\rm Span}_{\mathbb{C}}\{x_{i},y_{i}\mid1\leqslant i\leqslant n\}.
\end{equation}
\end{lemma}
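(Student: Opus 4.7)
The plan is to verify the two defining conditions of a $\mathbb{Z}_2$-grading directly from the commutation table in Definition \ref{d1}. First, the vector-space decomposition $\mathcal{S}_n = (\mathcal{S}_n)_{\bar{0}} \oplus (\mathcal{S}_n)_{\bar{1}}$ is immediate, since the basis listed in Definition \ref{d1} is the disjoint union of the bases prescribed for the two summands in \eqref{Grad1}.

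Next I would check the inclusion $[(\mathcal{S}_n)_{\bar{\alpha}}, (\mathcal{S}_n)_{\bar{\beta}}] \subseteq (\mathcal{S}_n)_{\overline{\alpha+\beta}}$ on pairs of basis vectors, splitting into three cases. For the even–even case, the brackets among $\{e,f,h\}$ give elements of $\mathrm{Span}_\mathbb{C}\{e,f,h\}$; the $s_{jk}$ close under bracket into $\mathrm{Span}_\mathbb{C}\{s_{jk}\}$; the $\{e,f,h\}$ commute with the $s_{jk}$; and $z$ is central, so all such brackets remain inside $(\mathcal{S}_n)_{\bar{0}}$. For the even–odd case, the relevant non-trivial brackets $[h,x_i],\ [h,y_i],\ [e,y_i],\ [f,x_i],\ [s_{jk},x_i],\ [s_{jk},y_i]$ are all linear combinations of $x_i$'s and $y_i$'s, hence land in $(\mathcal{S}_n)_{\bar{1}}$, while $z$ commutes with $x_i$ and $y_i$. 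For the odd–odd case, the only non-zero brackets are $[x_i,y_i]=z$, which lies in $(\mathcal{S}_n)_{\bar{0}}$.

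No genuine obstacle arises; this is a mechanical inspection of the relations in Definition \ref{d1}. The only step requiring any attention is the bracket $[s_{jk},s_{lm}]=\delta_{lk}s_{jm}+\delta_{jm}s_{kl}+\delta_{mk}s_{lj}+\delta_{lj}s_{mk}$, whose output is manifestly a combination of $s$-basis vectors and therefore still even. Once the three cases above are dispatched, the decomposition \eqref{Grad} is confirmed to be a $\mathbb{Z}_2$-grading, and the lemma follows.
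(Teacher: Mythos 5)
Your verification is correct and is exactly the direct check the paper has in mind (the paper offers no written proof, stating only that the lemma ``can be directly verified''). The only cosmetic omission is that you do not explicitly note the ``finitely generated'' claim, which is immediate since $\mathcal{S}_{n}$ is finite-dimensional and hence generated by its (finite) basis.
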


In the present paper, we will study the $\frac 12$-derivations of the Schr\"{o}dinger algebra $\mathcal{S}_{n}$ in $(n+1)$-dimensional space-time. We prove that every $\frac 12$-derivation on $\mathcal{S}_{n}$ is trivial for any integer $n\neq 2$ and it follows that there are no non-trivial transposed Poisson algebra structures defined on $\mathcal{S}_{n}$ $(n\neq2)$.
All $\frac{1}{2}$-derivations and transposed Poisson structures for the algebra $\mathcal{S}_{2}$ are obtained.
As an application, it is also proved that the Schr\"{o}dinger algebra $\mathcal{S}_{2}$ admits a non-trivial ${\rm Hom}$-Lie structure.

\section{Preliminaries}\ \ \ \ \ \ \ \ \

In this section, we recall some definitions and known results for studying transposed Poisson structures. Although all algebras and vector spaces are considered over complex field, many results can be proven over other fields without modifications of proofs.

\begin{definition}\label{d2}
Let $\mathcal {L}$ be a vector space equipped with two nonzero bilinear operations $\cdot$ and $[\cdot, \cdot]$. The triple $(\mathcal {L}, \cdot, [\cdot, \cdot])$ is called a transposed Poisson algebra if $(\mathcal {L}, \cdot)$ is a commutative associative algebra and $(\mathcal {L}, [\cdot, \cdot])$ is a Lie algebra that satisfies the following compatibility condition
$$
\begin{array}{c}
2z \cdot [x,y]=[z \cdot x, y]+[x, z \cdot y].
\end{array}
$$
\end{definition}

Transposed Poisson algebras were first introduced in a paper by Bi, Bai, Guo and Wu \cite{bbgw}.

\begin{definition}\label{d2222}
Let $(\mathcal {L}, [\cdot, \cdot])$ be a Lie algebra. A transposed Poisson structure on $(\mathcal {L}, [\cdot, \cdot])$ is a commutative associative multiplication $\cdot$ in $\mathcal {L}$ which makes $(\mathcal {L}, \cdot, [\cdot, \cdot])$ a transposed Poisson algebra.
\end{definition}

\begin{definition}\label{d3}
Let $(\mathcal {L}, [\cdot, \cdot])$ be a Lie algebra, $\varphi: \mathcal {L}\rightarrow \mathcal {L} $ be a linear map. Then $\varphi$ is a $\frac 12$-derivation if it satisfies
$$
\begin{array}{c}
\varphi ([x,y])= \frac 12 ([\varphi (x), y]+[x, \varphi (y)]).
\end{array}
$$
\end{definition}

Observe that $\frac 12$-derivations are a particular case of $\delta$-derivations intoduced by Filippov in 1998 \cite{fil1}.
The main examples of $\frac 12$-derivations is the multiplication by an element from the ground field. Let us call such $\frac 12$-derivations as \emph{trivial
$\frac 12$-derivations.}

Let $G$ be an abelian group, $\mathcal{L}=\bigoplus \limits_{g\in G}\mathcal{L}_{g}$ be a $G$-graded Lie algebra.
We say that a $\frac 12$-derivation $\varphi$ has degree $g$ (deg($\varphi$)=$g$) if $\varphi(\mathcal{L}_{h})\subseteq \mathcal{L}_{g+h}, \forall h\in G $.
Let $\triangle(\mathcal{L})$ denote the space of $\frac 12$-derivations and write $\triangle_{g}(\mathcal{L})=\{\varphi \in \triangle(\mathcal{L}) \mid \deg(\varphi)=g\}$.
The following lemma is useful in our proof, but we did not find a proof of it, so we will present it below.

\begin{lemma}\label{l01}
Let $\mathcal{L}=\bigoplus \limits_{g\in G}\mathcal{L}_{g}$ be a $G$-graded Lie algebra and finitely generated (as a Lie algebra). Then
$$
\triangle(\mathcal{L})=\bigoplus \limits_{g\in G}\triangle_{g}(\mathcal{L}).
$$
\end{lemma}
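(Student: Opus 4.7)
The plan is to define, for each $g \in G$, a candidate homogeneous component $\varphi_g$ by taking projections onto graded pieces, then verify (i) each $\varphi_g$ is itself a $\frac12$-derivation, (ii) the pointwise sum $\sum_g \varphi_g$ recovers $\varphi$, and (iii) only finitely many $\varphi_g$ are nonzero. Step (iii) is where the finite generation hypothesis is essential.

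First I would set up notation. Write $\pi_k \colon \mathcal{L} \to \mathcal{L}_k$ for the projection. For a homogeneous $x \in \mathcal{L}_h$, set $\varphi_g(x) = \pi_{g+h}(\varphi(x))$, and extend $\varphi_g$ to all of $\mathcal{L}$ by linearity using the direct sum decomposition $\mathcal{L}=\bigoplus_h \mathcal{L}_h$. By construction $\varphi_g$ is homogeneous of degree $g$. Since each $x \in \mathcal{L}$ has a finite decomposition $x = \sum_h x_h$ into homogeneous parts, and each $\varphi(x_h)$ has finitely many nonzero homogeneous components, the pointwise identity $\varphi(x) = \sum_g \varphi_g(x)$ holds with only finitely many summands nonzero for each fixed $x$.

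Next I would verify that $\varphi_g \in \triangle_g(\mathcal{L})$. It suffices to check the $\frac12$-derivation identity on homogeneous $x \in \mathcal{L}_h$, $y \in \mathcal{L}_k$. Since $[x,y] \in \mathcal{L}_{h+k}$, by definition
\[
\varphi_g([x,y]) = \pi_{g+h+k}\bigl(\varphi([x,y])\bigr) = \tfrac12 \pi_{g+h+k}\bigl([\varphi(x),y] + [x,\varphi(y)]\bigr).
\]
Because $y \in \mathcal{L}_k$, the $(g+h+k)$-component of $[\varphi(x),y]$ equals $[\pi_{g+h}(\varphi(x)),y] = [\varphi_g(x),y]$, and similarly the $(g+h+k)$-component of $[x,\varphi(y)]$ equals $[x,\varphi_g(y)]$. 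Thus $\varphi_g([x,y]) = \tfrac12([\varphi_g(x),y]+[x,\varphi_g(y)])$, proving $\varphi_g \in \triangle_g(\mathcal{L})$.

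The main obstacle is step (iii): showing $\varphi_g = 0$ for all but finitely many $g$, so that $\sum_g \varphi_g$ is genuinely an element of $\bigoplus_g \triangle_g(\mathcal{L})$ and not merely a formal series. Here I would invoke finite generation. Pick a finite generating set of $\mathcal{L}$; decomposing each generator into its homogeneous components yields a finite set $\{v_1,\dots,v_m\}$ of homogeneous generators. The finite set $S = \{g \in G : \varphi_g(v_i)\neq 0 \text{ for some } i\}$ is finite, being contained in the union of the (finite) supports of $\varphi(v_1),\dots,\varphi(v_m)$ suitably shifted. For $g \notin S$, every $\varphi_g(v_i)=0$, and the $\frac12$-derivation identity (already established for $\varphi_g$) gives $\varphi_g([v_i,v_j]) = \tfrac12([\varphi_g(v_i),v_j]+[v_i,\varphi_g(v_j)])=0$; iterating on bracket length, $\varphi_g$ vanishes on every iterated bracket of the $v_i$'s. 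Since these iterated brackets together with the $v_i$ span $\mathcal{L}$, we conclude $\varphi_g = 0$ for all $g \notin S$. Combined with the trivial observation that the homogeneous subspaces $\triangle_g(\mathcal{L})$ intersect only in $0$ (an operator of pure degree $g$ is determined by its action on homogeneous elements, and different $g$'s produce images in different graded pieces), this yields the direct sum decomposition.
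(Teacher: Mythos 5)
Your proposal is correct and follows essentially the same route as the paper: both define the degree-$g$ component $\varphi_g$ by composing $\varphi$ with the graded projections, verify by the same computation on homogeneous elements that each $\varphi_g$ is a $\frac12$-derivation of degree $g$, and then use finite generation to see that only finitely many $\varphi_g$ are nonzero. The only cosmetic difference is in that last step: the paper exhibits an explicit finite index set $T$ built from the supports of a generating set and checks that $\varphi$ and $\sum_{q\in T}\varphi_q$ agree on the generators, whereas you show directly that $\varphi_g$ kills the generators for $g$ outside a finite set and propagate the vanishing through iterated brackets --- the same mechanism in a slightly different order.
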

\begin{proof}
For each element $g\in G$, we let $\rho_{g}:\mathcal{L}\rightarrow \mathcal{L}_{g}$ denote the canonical projection.
According to the assumption, there is a finite subset $S\subseteq \mathcal{L}$
generating $\mathcal{L}$. Let $\varphi$ be a $\frac 12$-derivation. Then there are finite sets $Q, R \subseteq G$, such that
\begin{equation}\label{e01}
\begin{array}{c}
S\subseteq \bigoplus \limits_{g\in Q}\mathcal{L}_{g} \quad and \quad \varphi(S)\subseteq \bigoplus \limits_{g\in R}\mathcal{L}_{g}.
\end{array}
\end{equation}
For $g\in G$, put $\varphi_{g}=\sum\limits_{m\in G}\rho_{g+m} \circ \varphi \circ \rho_{m}$. Since for $x_{h}\in \mathcal{L}_{h}$ and $x_{k}\in \mathcal{L}_{k}$, we have
$$
\setlength{\arraycolsep}{1.5pt}
\begin{array}{rcl}
\varphi_{g}([x_{h},x_{k}])&=&\sum\limits_{m\in G}\rho_{g+m} \circ \varphi \circ \rho_{m}([x_{h},x_{k}])\\
&=&\rho_{g+h+k}\circ \varphi([x_{h},x_{k}])\\
&=&\frac 12(\rho_{g+h+k}([\varphi(x_{h}),x_{k}])+\rho_{g+h+k}([x_{h},\varphi (x_{k})]))\\
&=&\frac 12([\rho_{g+h}(\varphi(x_{h})),x_{k}]+[x_{h},\rho_{g+k}(\varphi(x_{k}))])\\
&=&\frac 12([\sum\limits_{m\in G}\rho_{g+m} \circ \varphi \circ \rho_{m}(x_{h}),x_{k}]+[x_{h},\sum\limits_{m\in G}\rho_{g+m} \circ \varphi \circ \rho_{m}(x_{k})])\\
&=&\frac 12([\varphi_{g}(x_{h}),x_{k}]+[x_{h},\varphi_{g}(x_{k})]).
\end{array}
$$
Suppose $\varphi(x_{k})=\sum\limits_{i\in G}\alpha_{i}$, where $\alpha_{i}\in \mathcal{L}_{i}$. Then
$$
\setlength{\arraycolsep}{1.5pt}
\begin{array}{rcl}
\varphi_{g}(x_{k})&=&\sum\limits_{m\in G}\rho_{g+m} \circ \varphi \circ \rho_{m}(x_{k})\\
&=&\rho_{g+k} \circ \varphi(x_{k})\\
&=&\rho_{g+k}(\sum\limits_{i\in G}\alpha_{i})\\
&=&\alpha_{g+k}\in \mathcal{L}_{g+k}.
\end{array}
$$
It follows that $\varphi_{g}$ is contained in $\triangle_{g}(\mathcal{L})$.

Let $T=\{g-h\mid h\in Q, g\in R\}$. Then $T$ is finite and we obtain, observing (\ref{e01}), for $y\in S$,
$$
\setlength{\arraycolsep}{1.5pt}
\begin{array}{rcl}
\varphi(y)&=&\sum\limits_{g\in R}\rho_{g}\circ \varphi(y)\\
&=&\sum\limits_{g\in R}\sum\limits_{h\in Q}\rho_{g} \circ \varphi \circ \rho_{h}(y)\\
&=&\sum\limits_{h\in Q}(\sum\limits_{g\in R}\rho_{(g-h)+h} \circ \varphi \circ \rho_{h}(y))\\
&=&\sum\limits_{h\in Q}(\sum\limits_{q\in T}\rho_{q+h} \circ \varphi \circ \rho_{h}(y))\\
&=&\sum\limits_{q\in T}\sum\limits_{h\in Q}\rho_{q+h} \circ \varphi \circ \rho_{h}(y)\\
&=&\sum\limits_{q\in T}\sum\limits_{h\in G}\rho_{q+h} \circ \varphi \circ \rho_{h}(y)\\
&=&\sum\limits_{q\in T}\varphi_{q}(y).
\end{array}
$$
This shows that the derivations $\varphi$ and $\sum\limits_{q\in T}\varphi_{q}$ coincide on $S$. As $S$ generates $\mathcal{L}$, we obtain
$\varphi=\sum\limits_{q\in T}\varphi_{q}$. This proves the lemma.
\end{proof}

On the other hand, it is easy to see that Definitions \ref{d1} and \ref{d2} imply the following key lemma.

\begin{lemma}\label{l1} (see {\rm\cite{fml}})
Let $(\mathcal {L}, \cdot, [\cdot, \cdot])$ be a transposed Poisson algebra and $z$ an arbitrary element from $\mathcal {L}$.
Then the left multiplication $l_{z}$ in the associative commutative algebra $(\mathcal {L}, \cdot)$ gives a $\frac 12$-derivation of the Lie algebra $(\mathcal {L}, [\cdot, \cdot])$.
\end{lemma}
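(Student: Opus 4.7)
The statement is essentially a direct unpacking of the compatibility axiom defining a transposed Poisson algebra, so the plan is very short. First I would fix $z \in \mathcal{L}$ and define the map $l_z : \mathcal{L} \to \mathcal{L}$ by $l_z(x) = z \cdot x$. Linearity of $l_z$ is immediate from bilinearity of the associative product $\cdot$, so the only thing left to check is the $\frac{1}{2}$-derivation identity of Definition \ref{d3}.

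Next, I would invoke the compatibility condition from Definition \ref{d2}, namely
$$
2 z \cdot [x, y] = [z \cdot x, y] + [x, z \cdot y],
$$
which holds for all $x, y \in \mathcal{L}$. Dividing by $2$ and rewriting the left-hand side as $l_z([x,y])$ and the right-hand side in terms of $l_z(x)$ and $l_z(y)$ yields
$$
l_z([x, y]) \;=\; \tfrac{1}{2}\bigl([l_z(x), y] + [x, l_z(y)]\bigr),
$$
which is exactly the defining relation of a $\frac{1}{2}$-derivation. Hence $l_z \in \triangle(\mathcal{L})$, as claimed.

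There is really no obstacle here: the compatibility axiom of a transposed Poisson algebra was engineered so that left multiplication operators are $\frac{1}{2}$-derivations of the underlying Lie bracket, and the proof is just a rewriting. The substance of the lemma lies not in its verification but in its use downstream, since it reduces the classification of transposed Poisson structures on a fixed Lie algebra $(\mathcal{L}, [\cdot,\cdot])$ to the study of the space $\triangle(\mathcal{L})$ of $\frac{1}{2}$-derivations — which is precisely the strategy the rest of the paper will deploy for $\mathcal{S}_n$, with the graded decomposition in Lemma \ref{l01} providing the computational handle.
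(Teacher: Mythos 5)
Your proof is correct and is exactly the intended argument: the paper itself omits the proof (citing \cite{fml}) precisely because, as it remarks, the lemma follows immediately from the definitions by dividing the compatibility condition $2z\cdot[x,y]=[z\cdot x,y]+[x,z\cdot y]$ by $2$ and reading it as the $\frac{1}{2}$-derivation identity for $l_z$. Nothing is missing.
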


By Lemma \ref{l1}, it is easy to prove the following lemma.

\begin{lemma}\label{t1} (see {\rm\cite{fml}})
Let $\mathcal {L}$ be a Lie algebra without non-trivial $\frac 12$-derivations. Then every transposed Poisson structure defined on $\mathcal {L}$ is trivial.
\end{lemma}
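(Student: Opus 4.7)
The plan hinges on combining Lemma~\ref{l1} with the commutativity of the associative product. Given any transposed Poisson structure $(\mathcal{L},\cdot,[\cdot,\cdot])$ on $\mathcal{L}$, Lemma~\ref{l1} tells us that for every fixed $z\in\mathcal{L}$ the left multiplication $l_z\colon x\mapsto z\cdot x$ is a $\tfrac12$-derivation of the Lie algebra $(\mathcal{L},[\cdot,\cdot])$. By hypothesis each such $l_z$ is trivial, i.e.\ a scalar multiple of the identity, so there is a function $\alpha\colon\mathcal{L}\to\mathbb{C}$ such that
$$
z\cdot x=\alpha(z)\,x\quad\text{for all } x,z\in\mathcal{L}.
$$

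The second step exploits commutativity of $\cdot$. Interchanging the roles of $x$ and $z$ gives $x\cdot z=\alpha(x)\,z$, hence
$$
\alpha(z)\,x=\alpha(x)\,z\quad\text{for all } x,z\in\mathcal{L}.
$$
As soon as $\dim\mathcal{L}\ge 2$, we may pick $x,z$ linearly independent, and this identity forces $\alpha(z)=\alpha(x)=0$. Since every nonzero element of $\mathcal{L}$ can be paired with such an $x$, we obtain $\alpha\equiv 0$, so $z\cdot x=0$ for all $x,z$. Thus the associative product is identically zero and the transposed Poisson structure is trivial.

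I do not foresee any genuine obstacle: the whole argument is a short linear-algebra chase once Lemma~\ref{l1} is in hand. The only care needed is to appeal to the definition of a trivial $\tfrac12$-derivation to get the well-defined scalar $\alpha(z)$, and to dispose of the degenerate case $\dim\mathcal{L}\le 1$, in which the Lie bracket is automatically zero, the compatibility condition is vacuous, and the lemma holds trivially. This edge case is of no consequence for the applications to $\mathcal{S}_n$ pursued later in the paper.
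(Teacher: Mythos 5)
Your argument is correct and is precisely the intended proof: the paper itself gives no details (it defers to Lemma~\ref{l1} and the reference \cite{fml}), and the standard argument there is exactly your two-step chase — triviality of each $l_z$ yields $z\cdot x=\alpha(z)x$, and commutativity of $\cdot$ forces $\alpha\equiv 0$ once two linearly independent elements exist. Your handling of the degenerate case $\dim\mathcal{L}\le 1$ is also fine, since Definition~\ref{d2} requires the bracket to be a nonzero operation, so no transposed Poisson structure exists there and the claim is vacuous.
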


\section{Transposed Poisson algebra structures on the Schr\"{o}dinger algebra}\ \ \ \ \ \ \ \ \

In this section, we describe transposed Poisson algebra structures on the Schr\"{o}dinger algebra $\mathcal{S}_{n}.$
The Schr\"{o}dinger algebra $\mathcal{S}_{1}$ is a semidirect product of
the $3$-dimensional Lie algebra $\mathfrak{sl}_{2}$ and the $3$-dimensional Heisenberg algebra $\mathfrak{h}_{1}$. It is easy to see that all $\frac 1 2$-derivations of $\mathcal{S}_{1}$ are trivial (see also \cite[Corollary 9]{fml}).
For $n=2$, we now define a $\frac 1 2$-derivation of $\mathcal{S}_{2}$ as follows. The verification is straightforward.

\begin{definition}
The $\frac 1 2$-derivation $\Re$ of the Schr\"{o}dinger algebra $S_2$ is determined by
\begin{equation}\label{ders2}
\Re(s_{12}) =z, \ \ \Re (u)=0, \forall u\in \{f, h, e, z, x_1, y_1, x_2, y_2\}.
\end{equation}
\end{definition}

Now we study the transposed Poisson algebra structures on the Schr\"{o}dinger algebra $\mathcal{S}_{n}$. To obtain this result, we first have to prove a few lemmas.

\begin{lemma}\label{12}
Let $\varphi_0$ be a  $\frac 12$-derivation of $\mathcal{S}_{n}$ $(n \geq 3)$ such that
$$
\varphi_{0}((\mathcal{S}_{n})_{\bar{0}})\subseteq(\mathcal{S}_{n})_{\bar{0}}, \quad \varphi_{0}((\mathcal{S}_{n})_{\bar{1}})\subseteq(\mathcal{S}_{n})_{\bar{1}},
$$
where the graded spaces $(\mathcal{S}_{n})_{\bar{0}}$ and $(\mathcal{S}_{n})_{\bar{1}}$ are given by \eqref{Grad1}.
Then $\varphi_0$ is trivial.
\end{lemma}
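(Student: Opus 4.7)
My plan is to parametrize $\varphi_0$ on each basis element and then propagate the $\frac{1}{2}$-derivation identity from $\mathfrak{sl}_2$ outward to the Heisenberg part and finally to $\mathfrak{so}_n$, with the condition $n\geq 3$ entering decisively only at the last step (via the freedom to choose an auxiliary index outside $\{j,k\}$ and the ability to write each $s_{jk}$ as a nontrivial bracket in $\mathfrak{so}_n$).

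Write $\varphi_0(u) = a^u_e e + a^u_f f + a^u_h h + a^u_z z + \sum_{l<m} a^u_{lm}s_{lm}$ for every even basis vector $u$, and $\varphi_0(x_i)=\sum_j(\alpha_{ij}x_j+\beta_{ij}y_j)$, $\varphi_0(y_i)=\sum_j(\gamma_{ij}x_j+\tilde\delta_{ij}y_j)$. Applying the $\frac{1}{2}$-derivation identity to $[h,e]=2e$, $[f,h]=2f$, $[e,f]=h$ kills all components of $\varphi_0(e),\varphi_0(f),\varphi_0(h)$ outside $\mathfrak{sl}_2$ (since $z$ and the $s_{lm}$ are centralised by $\mathfrak{sl}_2$, they cannot appear on the right-hand side), and a short linear computation on the surviving $3\times3$ system yields $\varphi_0|_{\mathfrak{sl}_2}=\lambda\cdot\mathrm{id}$ for some $\lambda\in\mathbb{C}$. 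Plugging $\varphi_0(h)=\lambda h$ into the relations $[h,x_i]=x_i$ and $[h,y_i]=-y_i$, the eigenspace separation under $\mathrm{ad}\,h$ immediately forces $\varphi_0(x_i)=\lambda x_i$ and $\varphi_0(y_i)=\lambda y_i$; then $[x_i,y_i]=z$ gives $\varphi_0(z)=\lambda z$ (the other components of $\varphi_0(z)$ have nowhere to appear on the right).

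For $\varphi_0(s_{jk})$, I apply the identity to $[s_{jk},x_i]=\delta_{ki}x_j-\delta_{ji}x_k$. Matching $y$-components eliminates $a^{s_{jk}}_f$. Choosing $i\notin\{j,k\}$ (possible precisely because $n\geq 3$) makes the left-hand side vanish, which kills $a^{s_{jk}}_h$ and every $a^{s_{jk}}_{lm}$ with $\{l,m\}\neq\{j,k\}$; taking $i\in\{j,k\}$ then fixes $a^{s_{jk}}_{jk}=\lambda$. The parallel relation $[s_{jk},y_i]$ eliminates $a^{s_{jk}}_e$, leaving $\varphi_0(s_{jk})=a^{s_{jk}}_z z+\lambda s_{jk}$. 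To remove the last $z$-coefficient, use $s_{jk}=[s_{ja},s_{ak}]$ for any $a\notin\{j,k\}$ (again requiring $n\geq 3$); since $[z,\mathfrak{so}_n]=0$, applying the $\frac{1}{2}$-derivation identity to this bracket gives $\varphi_0(s_{jk})=\lambda s_{jk}$, whence $a^{s_{jk}}_z=0$ and $\varphi_0=\lambda\cdot\mathrm{id}$ is trivial.

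The main obstacle, and the sole reason for the hypothesis $n\geq 3$, is the $s_{jk}$-analysis: it rests on two independent features of the case $n\geq 3$, namely the availability of an index outside $\{j,k\}$ and the non-abelianness of $\mathfrak{so}_n$. Both fail for $n=2$, consistent with the non-trivial $\frac{1}{2}$-derivation $\Re$ displayed above. The remaining technical work amounts to careful Kronecker-delta bookkeeping with the antisymmetry $s_{jk}=-s_{kj}$, which is routine but worth writing out explicitly in case analyses $i\in\{j,k\}$ versus $i\notin\{j,k\}$.
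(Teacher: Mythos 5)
Your proposal is correct and follows essentially the same strategy as the paper's proof: a basis-by-basis coefficient computation driven by the same bracket relations, with the hypothesis $n\ge 3$ entering exactly where the paper uses it, namely to pick an auxiliary index outside $\{j,k\}$ and to write $s_{jk}=[s_{ja},s_{ak}]$. Your ordering (settling $\mathfrak{sl}_2$ first and then reading off $\varphi_0(x_i)$, $\varphi_0(y_i)$ from the $\mathrm{ad}\,h$-eigenspace decomposition) is a clean streamlining of the paper's Steps 1--4, but not a genuinely different route.
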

\begin{proof}
By (\ref{Grad1}) we can assume that
\begin{eqnarray}
&&\varphi_{0}(x_{i})=\sum\limits_{j=1}^n\alpha_{ij}x_{j}+\sum\limits_{k=1}^n\beta_{ik}y_{k}, \label{xiform}\\ &&\varphi_{0}(y_{i})=\sum\limits_{j=1}^n\gamma_{ij}x_{j}+\sum\limits_{k=1}^n\epsilon_{ik}y_{k}. \label{yiform} \\
&&\varphi_{0}(e)=\rho_{1}e+\rho_{2}h+\rho_{3}f+\rho_{4}z+\sum\limits_{1\leqslant k<l\leqslant n}\rho_{kl}s_{kl},\label{eform}\\ &&\varphi_{0}(f)=\lambda_{1}e+\lambda_{2}h+\lambda_{3}f+\lambda_{4}z+\sum\limits_{1\leqslant k<l\leqslant n}\lambda_{kl}s_{kl},\label{fform}\\ &&\varphi_{0}(h)=\mu_{1}e+\mu_{2}h+\mu_{3}f+\mu_{4}z+\sum\limits_{1\leqslant k<l\leqslant n}\mu_{kl}s_{kl}, \label{hform}\\
&&\varphi_{0}(s_{kl})=\sigma_{1}^{kl}e+\sigma_{2}^{kl}h+\sigma_{3}^{kl}f+\sigma_{4}^{kl}z+\sum\limits_{1\leqslant i<j\leqslant n}\sigma_{ij}^{kl}s_{ij}, \label{sform}
\end{eqnarray}
where $\alpha_{ij}, \beta_{ij}, \gamma_{ij}, \epsilon_{ij}, \rho_{i}, \rho_{ij}, \lambda_i, \lambda_{ij}, \mu_i, \mu_{ij}, \sigma_{i}^{kl}, \sigma_{ij}^{kl}$ are
some complex numbers which contains only finitely many nonzero.

Let \begin{equation}\label{theta}
\theta=\frac 12({\alpha_{11}+\epsilon_{11}}).
\end{equation}
Next, the lemma will be proved by the following steps.

 {\bf Step 1.}   By (\ref{xiform}) and (\ref{yiform}) we have
$$
\begin{array}{rcl}
\varphi_{0}(z)&=&\varphi_{0}([x_{i},y_{i}])=\frac 12([\varphi_{0}(x_{i}),y_{i}]+[x_{i},\varphi_{0}(y_{i})])\\
&=&\frac12 ([\sum\limits_{j=1}^n\alpha_{ij}x_{j}+\sum\limits_{k=1}^n\beta_{ik}y_{k},y_{i}]+[x_{i},\sum\limits_{j=1}^n\gamma_{ij}x_{j}+\sum\limits_{k=1}^n\epsilon_{ik}y_{k}])\\
&=&\frac 12(\alpha_{ii}+\epsilon_{ii}) z.
\end{array}
$$
This means $\alpha_{ii}+\epsilon_{ii}=\alpha_{11}+\epsilon_{11}$ for all $i=1, \cdots, n$, and together with (\ref{theta}) we have
\begin{equation}\label{zform}
\varphi_{0}(z)=\theta z.
\end{equation}

 {\bf Step 2.} By (\ref{eform}) and (\ref{yiform}) we have
$$
\setlength{\arraycolsep}{1.5pt}
\begin{array}{rcl}
\varphi_{0}(x_{i})&=&\varphi_{0}([e,y_{i}])=\frac12 ([\varphi_{0}(e),y_{i}]+[e,\varphi_{0}(y_{i})])\\
&=&\frac12([\rho_{1}e+\rho_{2}h+\rho_{3}f+\rho_{4}z+\sum\limits_{1\leqslant k<l\leqslant n}\rho_{kl}s_{kl},y_{i}]+
[e,\sum\limits_{j=1}^n\gamma_{ij}x_{j}+\sum\limits_{k=1}^n\epsilon_{ik}y_{k}])\\
&=&\frac12 ((\rho_{1}+\epsilon_{ii})x_{i}-\rho_{2}y_{i}+\sum\limits_{\begin{subarray}{1}{k=1}\\k\ne i\end{subarray}}^{n}\epsilon_{ik}x_{k}+
\sum\limits_{1\leqslant k<i\leqslant n}\rho_{ki}y_{k}-\sum\limits_{1\leqslant i<l\leqslant n}\rho_{il}y_{l}).
\end{array}
$$
It follows by (\ref{xiform}) that
\begin{equation}\label{e2}
\begin{array}{c}
\rho_{1}=2\alpha_{ii}-\epsilon_{ii},
\end{array}
\end{equation}
\begin{equation}\label{e3}
\begin{array}{c}
\beta_{ii}=-\frac 12{\rho_{2}},
\end{array}
\end{equation}
\begin{equation}\label{e4}
\begin{array}{c}
\alpha_{ij}=\frac 12 {\epsilon_{ij}},\ (j\ne i),
\end{array}
\end{equation}
\begin{equation}\label{e5}
\begin{array}{c}
\beta_{ik}=\frac 12{\rho_{ki}},(k<i),\quad \beta_{ik}=-\frac 12 {\rho_{ik}},(k>i).
\end{array}
\end{equation}
Similarly, by (\ref{eform}) and (\ref{xiform}) we have
$$
\setlength{\arraycolsep}{1.5pt}
\begin{array}{rcl}
0&=&\varphi_{0}([e,x_{i}])=\frac12 ([\varphi_{0}(e),x_{i}]+[e,\varphi_{0}(x_{i})])\\
&=&\frac12 ([\rho_{1}e+\rho_{2}h+\rho_{3}f+\rho_{4}z+\sum\limits_{1\leqslant k<l\leqslant n}\rho_{kl}s_{kl},x_{i}]+
[e,\sum\limits_{j=1}^n\alpha_{ij}x_{j}+\sum\limits_{k=1}^n\beta_{ik}y_{k}])\\
&=&\frac12 ((\rho_{2}+\beta_{ii})x_{i}+\rho_{3}y_{i}+\sum\limits_{1\leqslant k<i\leqslant n}(\beta_{ik}+\rho_{ki})x_{k}+
\sum\limits_{1\leqslant i<k\leqslant n}(\beta_{ik}-\rho_{ik})x_{k}),
\end{array}
$$
which yields
\begin{equation}\label{e6}
\begin{array}{c}
\rho_{2}+\beta_{ii}=0,
\end{array}
\end{equation}
\begin{equation}\label{e7}
\begin{array}{c}
\rho_{3}=0,
\end{array}
\end{equation}
\begin{equation}\label{e8}
\begin{array}{c}
\beta_{ik}+\rho_{ki}=0\, (k<i), \quad\beta_{ik}-\rho_{ik}=0\, (k>i).
\end{array}
\end{equation}
Substituting(\ref{e3}) and (\ref{e5}) into (\ref{e6}) and (\ref{e8}) respectively, we have
\begin{equation}\label{e9}
\begin{array}{c}
\rho_{2}=0,\quad\rho_{ki}=0\,(k<i),\quad\rho_{ik}=0\,(k>i),
\end{array}
\end{equation}
and
\begin{equation}\label{e10}
\begin{array}{c}
\beta_{ik}=0.
\end{array}
\end{equation}
Now by (\ref{fform}) and (\ref{xiform}) we have
$$
\setlength{\arraycolsep}{1.5pt}
\begin{array}{rcl}
\varphi_{0}(y_{i})&=&\varphi_{0}([f,x_{i}])=\frac12 ([\varphi_{0}(f),x_{i}]+[f,\varphi_{0}(x_{i})])\\
&=&\frac12 ([\lambda_{1}e+\lambda_{2}h+\lambda_{3}f+\lambda_{4}z+\sum\limits_{1\leqslant k<l\leqslant n}\lambda_{kl}s_{kl},x_{i}]+
[f,\sum\limits_{j=1}^n\alpha_{ij}x_{j}+\sum\limits_{k=1}^n\beta_{ik}y_{k}])\\
&=&\frac12(\lambda_{2}x_{i}+(\lambda_{3}+\alpha_{ii})y_{i}+\sum\limits_{1\leqslant k<i\leqslant n}\lambda_{ki}x_{k}-
\sum\limits_{1\leqslant i<l\leqslant n}\lambda_{il}x_{l}+\sum\limits_{\begin{subarray}{1}{j=1}\\j\ne i\end{subarray}}^{n}\alpha_{ij}y_{j}),
\end{array}
$$
which together with (\ref{yiform}) gives
\begin{equation}\label{e11}
\begin{array}{c}
\gamma_{ii}=\frac 12{\lambda_{2}},
\end{array}
\end{equation}
\begin{equation}\label{e12}
\begin{array}{c}
\lambda_{3}=2\epsilon_{ii}-\alpha_{ii},
\end{array}
\end{equation}
\begin{equation}\label{e13}
\begin{array}{c}
\gamma_{ij}=\frac 12{\lambda_{ji}}\,(j<i),\quad\gamma_{ij}=-\frac 12 {\lambda_{ij}}\,(j>i),
\end{array}
\end{equation}
\begin{equation}\label{e14}
\begin{array}{c}
\epsilon_{ij}=\frac 12{\alpha_{ij}}\,(j\ne i).
\end{array}
\end{equation}
Substituting (\ref{e4}) into (\ref{e14}), we have
\begin{equation}\label{e15}
\begin{array}{c}
\alpha_{ij}=0\,(j\ne i),
\end{array}
\end{equation}
\begin{equation}\label{e16}
\begin{array}{c}
\epsilon_{ij}=0\,(j\ne i).
\end{array}
\end{equation}
Similarly, by (\ref{fform}) and (\ref{yiform}) we have
$$
\setlength{\arraycolsep}{1.5pt}
\begin{array}{rcl}
0&=&\varphi_{0}([f,y_{i}])=\frac12 ([\varphi_{0}(f),y_{i}]+[f,\varphi_{0}(y_{i})])\\
&=&\frac12 ([\lambda_{1}e+\lambda_{2}h+\lambda_{3}f+\lambda_{4}z+\sum\limits_{1\leqslant k<l\leqslant n}\lambda_{kl}s_{kl},y_{i}]+
[f,\sum\limits_{j=1}^n\gamma_{ij}x_{j}+\sum\limits_{k=1}^n\epsilon_{ik}y_{k}])\\
&=&\frac 12(\lambda_{1}x_{i}+(\gamma_{ii}-\lambda_{2})y_{i}+\sum\limits_{1\leqslant k<i\leqslant n}(\gamma_{ik}+\lambda_{ki})y_{k}+
\sum\limits_{1\leqslant i<k\leqslant n}(\gamma_{ik}-\lambda_{ik})y_{k}),
\end{array}
$$
which gives
$$
\begin{array}{c}
\lambda_{1}=0,
\end{array}
$$
\begin{equation}\label{e18}
\begin{array}{c}
\lambda_{2}=\gamma_{ii},
\end{array}
\end{equation}
\begin{equation}\label{e19}
\begin{array}{c}
\gamma_{ij}+\lambda_{ji}=0\,(j<i),\quad \gamma_{ij}-\lambda_{ij}=0\, (j>i).
\end{array}
\end{equation}
Substituting(\ref{e11}) and (\ref{e13}) into (\ref{e18}) and (\ref{e19}) respectively, we have
$$
\begin{array}{c}
\lambda_{2}=0,\quad \lambda_{ji}=0\,(j<i),\quad\lambda_{ij}=0\, (j>i),
\end{array}
$$
\begin{equation}\label{e21}
\begin{array}{c}
\gamma_{ij}=0.
\end{array}
\end{equation}
It follows by (\ref{e10}), (\ref{e15}) that
\begin{equation}\label{xiform1}
\varphi_{0}(x_{i})=\alpha_{ii}x_{i}.
\end{equation}
and by (\ref{e16}), (\ref{e21}) that
\begin{equation}\label{yiform1}
\varphi_{0}(y_{i})=\epsilon_{ii}y_{i}.
\end{equation}

 {\bf Step 3.} By (\ref{hform}) and (\ref{xiform1}) we have
$$
\begin{array}{rcl}
\varphi_{0}(x_{i})&=&\varphi_{0}([h,x_{i}])=\frac12 ([\varphi_{0}(h),x_{i}]+[h,\varphi_{0}(x_{i})])\\
&=&\frac12([\mu_{1}e+\mu_{2}h+\mu_{3}f+\mu_{4}z+\sum\limits_{1\leqslant k<l\leqslant n}\mu_{kl}s_{kl},x_{i}]+[h,\alpha_{ii}x_{i}])\\
&=&\frac12((\mu_{2}+\alpha_{ii})x_{i}+\mu_{3}y_{i}+\sum\limits_{1\leqslant k<i\leqslant n}\mu_{ki}x_{k}-\sum\limits_{1\leqslant i<l\leqslant n}\mu_{il}x_{l}),
\end{array}
$$
which together with (\ref{xiform1}) gives
\begin{equation}\label{e22}
\mu_{2}=\alpha_{ii},
\end{equation}
$$
\mu_{3}=0, \quad\mu_{ki}=0\,(k<i), \quad \mu_{ik}=0\,(k>i).
$$
Similarly, By (\ref{hform}) and (\ref{yiform1}) we obtain
$$
\setlength{\arraycolsep}{1.5pt}
\begin{array}{rcl}
\varphi_{0}(y_{i})&=&\varphi_{0}([y_{i},h])=\frac12 ([\varphi_{0}(y_{i}),h]+[y_{i},\varphi_{0}(h)])\\
&=&\frac12([\epsilon_{ii}y_{i},h]+[y_{i},\mu_{1}e+\mu_{2}h+\mu_{3}f+\mu_{4}z+\sum\limits_{1\leqslant k<l\leqslant n}\mu_{kl}s_{kl}])\\
&=&\frac12(-\mu_{1}x_{i}+(\epsilon_{ii}+\mu_{2})y_{i}-\sum\limits_{1\leqslant k<i\leqslant n}\mu_{ki}y_{k}+\sum\limits_{1\leqslant i<l\leqslant n}\mu_{il}y_{l}),
\end{array}
$$
which together with (\ref{yiform1}) implies
$$
\mu_{1}=0,
$$
\begin{equation}\label{e25}
\mu_{2}=\epsilon_{ii}.
\end{equation}
It follows by (\ref{e22}), (\ref{e25}), (\ref{theta}), (\ref{e2}) and (\ref{e12}) that
\begin{equation}\label{e26}
\epsilon_{ii}=\mu_{2}=\alpha_{ii}=\theta=\rho_{1}=\lambda_{3}.
\end{equation}
Furthermore, by (\ref{xiform1}) and (\ref{yiform1})  with (\ref{e26}) one has
\begin{equation}\label{xiyiform2}
\varphi_{0}(x_{i})=\theta x_{i}, \quad\varphi_{0}(y_{i})=\theta y_{i}.
\end{equation}

 {\bf Step 4.}
By (\ref{e7}), (\ref{e9}), (\ref{e26}) with (\ref{eform}) we obtain
$$
\varphi_{0}(e)=\theta e+\rho_{4}z.
$$
Similarly, we have
\begin{equation}\label{fhform1}
\varphi_{0}(f)=\theta f+\lambda_{4}z,\quad \varphi_{0}(h)=\theta h+\mu_{4}z.
\end{equation}
Then we have
$$
\setlength{\arraycolsep}{1.5pt}
\begin{array}{rcl}
2\varphi_{0}(e)&=&\varphi_{0}([h,e])=\frac12 ([\varphi_{0}(h),e]+[h,\varphi_{0}(e)])\\
&=&\frac12([\theta h+\mu_{4}z,e]+[h,\theta e+\rho_{4}z])\\
&=&2\theta e,
\end{array}
$$
which gives
\begin{equation}\label{eform2}
\varphi_{0}(e)= \theta e.
\end{equation}
Similarly, by $-2f=[h,f]$ and (\ref{fhform1}) we obtain
\begin{equation}\label{fform2}
\varphi_{0}(f)= \theta f.
\end{equation}
Now by (\ref{eform2}) and (\ref{fform2}) one has
\begin{equation}\label{hform2}
\setlength{\arraycolsep}{1.5pt}
\begin{array}{rcl}
\varphi_{0}(h)&=&\varphi_{0}([e,f])=\frac12 ([\varphi_{0}(e),f]+[e,\varphi_{0}(f)])\\
&=&\frac12([\theta e,f]+[e,\theta f])\\
&=&\theta h.
\end{array}
\end{equation}

 {\bf Step 5.} By (\ref{sform}) and (\ref{xiyiform2}) we have
$$
\setlength{\arraycolsep}{1.5pt}
\begin{array}{rcl}
\varphi_{0}(x_{k})&=&\varphi_{0}([s_{ki},x_{i}])=\frac12 ([\varphi_{0}(s_{ki}),x_{i}]+[s_{ki},\varphi_{0}(x_{i})])\\
&=&\frac12([\sigma_{1}^{ki}e+\sigma_{2}^{ki}h+\sigma_{3}^{ki}f+\sigma_{4}^{ki}z+\sum\limits_{1\leqslant l<j\leqslant n}\sigma_{lj}^{ki}s_{lj},x_{i}]
+[s_{ki},\theta x_{i}])\\
&=&\frac12((\sigma_{ki}^{ki}+\theta )x_{k}+\sigma_{2}^{ki}x_{i}+\sigma_{3}^{ki}y_{i}+
\sum\limits_{\begin{subarray}{c}{1\leqslant l<i\leqslant n}\\{l\ne k}\end{subarray}}\sigma_{li}^{ki}x_{l}-
\sum\limits_{1\leqslant i<j\leqslant n}\sigma_{ij}^{ki}x_{j}),
\end{array}
$$
which together with (\ref{xiyiform2}) gives
\begin{equation}\label{e27}
\theta =\sigma_{ki}^{ki},
\end{equation}
\begin{equation}\label{e28}
\sigma_{2}^{ki}=\sigma_{3}^{ki}=0\,(k<i),
\end{equation}
\begin{equation}\label{e29}
\sigma_{li}^{ki}=0\,(k<i, k\ne l<i), \quad \sigma_{il}^{ki}=0\,(k<i<l).
\end{equation}
Similarly, by (\ref{sform}) and (\ref{xiyiform2}) with $\varphi_{0}(y_{k})=\varphi_{0}([s_{ki},y_{i}])$
 gives
\begin{equation}\label{e30}
\sigma_{1}^{ki}=0\,(k<i).
\end{equation}
By (\ref{e28}), (\ref{e30}) and (\ref{sform}) we obtain
\begin{equation}\label{e31}
\begin{array}{c}
\varphi_{0}(s_{kl})=\sigma_{4}^{kl}z+\sum\limits_{1\leqslant i<j\leqslant n}\sigma_{ij}^{kl}s_{ij}.
\end{array}
\end{equation}
For different $i,k,l$ with $k<l$, by (\ref{e31}) and (\ref{xiyiform2}) we have
$$
\setlength{\arraycolsep}{1.5pt}
\begin{array}{rcl}
0&=&\varphi_{0}([s_{kl},x_{i}])=\frac 12([\varphi_{0}(s_{kl}),x_{i}]+[s_{kl},\varphi_{0}(x_{i})])\\
&=&\frac 12([\sigma_{4}^{kl}z+\sum\limits_{1\leqslant i<j\leqslant n}\sigma_{ij}^{kl}s_{ij},x_{i}]+[s_{kl},\theta x_{i}])\\
&=&\frac12 (\sum\limits_{1\leqslant j<i\leqslant n}\sigma_{ji}^{kl}x_{j}-\sum\limits_{1\leqslant i<t\leqslant n}\sigma_{it}^{kl}x_{t}),
\end{array}
$$
which yields
\begin{equation}\label{e32}
\sigma_{ji}^{kl}=0\,(j<i, i\ne k\, \text{and}\, i\ne l),\quad \sigma_{ij}^{kl}=0\,(j>i, i\ne k\, \text{and}\, i\ne l).
\end{equation}
It follows by (\ref{e31}, (\ref{e27}, (\ref{e29}) and (\ref{e32}) that
\begin{equation}\label{sform1}
\varphi_{0}(s_{kl})=\sigma_{4}^{kl}z+\theta s_{kl}.
\end{equation}
Then by (\ref{sform1}) one can get
\begin{equation}\label{sform2}
\setlength{\arraycolsep}{1.5pt}
\begin{array}{rcl}
\varphi_{0}(s_{kj})&=&\varphi_{0}([s_{ij},s_{ik}])=\frac 12([\varphi_{0}(s_{ij}),s_{ik}]+[s_{ij},\varphi_{0}(s_{ik})])\\
&=&\frac 12([\sigma_{4}^{ij}z+\theta s_{ij},s_{ik}]+[s_{ij},\sigma_{4}^{ik}z+\theta s_{ik}])\\
&=&\theta s_{kj}.
\end{array}
\end{equation}

Summarizing the results of (\ref{zform}), (\ref{xiyiform2}),  (\ref{eform2}), (\ref{fform2}), (\ref{hform2})and (\ref{sform2}), we see that
$\varphi_{0}(u)=\theta u$ for all $u\in \mathcal{S}_{n}$.
Thus, $\varphi_{0}$ is a trivial $\frac 12$-derivation. The proof is completed.
\end{proof}

\begin{lemma}\label{13}
Let $\varphi_1$ be a $\frac 12$-derivation of $\mathcal{S}_{n}$ $(n \geq 3)$ such that
$$
\varphi_{1}((\mathcal{S}_{n})_{\bar{0}})\subseteq(\mathcal{S}_{n})_{\bar{1}}, \quad \varphi_{1}((\mathcal{S}_{n})_{\bar{1}})\subseteq(\mathcal{S}_{n})_{\bar{0}}.
$$
Then we have $\varphi_1=0$.
\end{lemma}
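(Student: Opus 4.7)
The plan is to write a completely general ansatz for $\varphi_1$ on each basis vector using the grading assumption: $\varphi_1(e), \varphi_1(f), \varphi_1(h), \varphi_1(z), \varphi_1(s_{kl})$ are each linear combinations of the $x_i$ and $y_i$, while $\varphi_1(x_i)$ and $\varphi_1(y_i)$ lie in $\mathrm{Span}_{\mathbb{C}}\{e, f, h, z, s_{kl}\mid 1\leq k<l\leq n\}$. Introducing scalar unknowns for all these coefficients, I would then apply the $\frac{1}{2}$-derivation identity to the defining brackets of $\mathcal{S}_n$ in a controlled order and propagate constraints until every scalar vanishes.

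I would begin by exploiting the centrality of $z$: since $[z, h] = 0$, the $\frac12$-derivation identity yields $[\varphi_1(z), h] = 0$, and because $\mathrm{ad}(h)$ acts on the $x_i$ and $y_i$ by eigenvalues $\pm 1$, this immediately forces $\varphi_1(z) = 0$. Next, applying the identity to $[h, x_j] = x_j$ and $[h, y_j] = -y_j$ and using that $\mathrm{ad}(h)$ has eigenvalues $2, -2, 0, 0, 0$ on $e, f, h, z, s_{kl}$, I can match weight components to kill most pieces of $\varphi_1(x_j)$ and $\varphi_1(y_j)$, leaving $\varphi_1(x_j)$ in $\mathbb{C}e + \mathbb{C}z$ and $\varphi_1(y_j)$ in $\mathbb{C}f + \mathbb{C}z$, with the $z$-coefficients linked to coefficients of $\varphi_1(h)$. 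The identities coming from $[x_i, y_i] = z$, $[e, x_i] = 0$, $[f, y_i] = 0$, $[e, y_i] = x_i$, and $[f, x_i] = y_i$, combined with the Cartan relations $[h, e] = 2e$ and $[f, h] = 2f$, should then interlock to force $\varphi_1 = 0$ on $\mathrm{Span}\{e, f, h, z, x_i, y_i\}$.

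The remaining task is to eliminate $\varphi_1(s_{kl})$. Evaluating the $\frac12$-derivation identity on the brackets $[s_{kl}, x_i]$ and $[s_{kl}, y_i]$ for all $i$ (these are zero when $i \notin \{k, l\}$ and equal to $\pm x_k, \pm x_l, \pm y_k, \pm y_l$ otherwise), and exploiting the nondegenerate pairing $[x_i, y_j] = \delta_{ij} z$, one reads off each coefficient of $\varphi_1(s_{kl})$ in the $x_j, y_j$ basis and finds it must vanish. The main obstacle is bookkeeping: the ansatz carries many scalar parameters, and the order of reductions matters, so that the $\mathrm{ad}(h)$-weight pruning must come first, the Heisenberg and $\mathfrak{sl}_2$ relations next, and the $\mathfrak{so}_n$ action last. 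The hypothesis $n \geq 3$ is convenient in this last step, since it guarantees indices $i \notin \{k, l\}$ are available to produce the vanishing conditions on the off-diagonal entries of $\varphi_1(s_{kl})$.
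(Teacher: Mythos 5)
Your proposal is correct and follows essentially the same route as the paper: write the general ansatz dictated by the grading and match coefficients against the defining brackets via the $\frac 12$-derivation identity. The only differences are cosmetic orderings --- the paper first derives $\varphi_1(e)=\varphi_1(f)=\varphi_1(h)=0$ from the $\mathfrak{sl}_2$ relations, kills $\varphi_1(s_{kl})$ via $[h,s_{kl}]=0$ rather than via the $\mathfrak{so}_n$-action on the $x_i,y_i$, and obtains $\varphi_1(z)=0$ last from $[x_i,y_i]=z$; note also that, contrary to your closing remark, the hypothesis $n\geq 3$ is not actually needed anywhere in this particular argument.
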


\begin{proof}
By (\ref{Grad1}) we can assume that
\begin{eqnarray}
&&\varphi_{1}(e)=\sum\limits_{i=1}^n\rho_{i}x_{i}+\sum\limits_{j=1}^n\mu_{j}y_{j},\label{eeform}\\
&&\varphi_{1}(f)=\sum\limits_{i=1}^n\lambda_{i}x_{i}+\sum\limits_{j=1}^n\theta_{j}y_{j},\label{ffform}\\ &&\varphi_{1}(h)=\sum\limits_{i=1}^n\gamma_{i}x_{i}+\sum\limits_{j=1}^n\epsilon_{j}y_{j}, \label{hhform}\\
&&\varphi_{1}(s_{kl})=\sum\limits_{i=1}^n\omega_{i}^{kl}x_{i}+\sum\limits_{j=1}^n\nu_{j}^{kl}y_{j}, \label{ssform}\\
&&\varphi_{1}(x_{i})=\alpha_{1}^ie+\alpha_{2}^ih+\alpha_{3}^if+\alpha_{4}^iz+\sum\limits_{1\leqslant k<l\leqslant n}\alpha_{kl}^is_{kl}, \label{xixiform}\\
&&\varphi_{1}(y_{i})=\beta_{1}^ie+\beta_{2}^ih+\beta_{3}^if+\beta_{4}^iz+\sum\limits_{1\leqslant k<l\leqslant n}\beta_{kl}^is_{kl}, \label{yiyiform}
\end{eqnarray}
where $\rho_{i}, \mu_i, \lambda_i, \theta{i}, \gamma_{i}, \epsilon_{i}, \omega_{i}^{kl}, \nu_{i}^{kl}, \alpha_{j}^i, \alpha_{kl}^i, \beta_{j}^i, \beta_{kl}^i$
belong to $\mathbb{C}$ which contains only finitely many nonzero complex numbers.
Next, the lemma will be proved by the following steps.

 {\bf Step 1.}   By (\ref{hhform}) and (\ref{eeform}) we have
$$
\setlength{\arraycolsep}{1.5pt}
\begin{array}{rcl}
2\varphi_{1}(e)&=&\varphi_{1}([h,e])=\frac 12([\varphi_{1}(h),e]+[h,\varphi_{1}(e)])\\
&=&\frac 12([\sum\limits_{i=1}^n\gamma_{i}x_{i}+\sum\limits_{j=1}^n\epsilon_{j}y_{j},e]+[h,\sum\limits_{i=1}^n\rho_{i}x_{i}+\sum\limits_{j=1}^n\mu_{j}y_{j}])\\
&=&\frac12 (\sum\limits_{i=1}^n(\rho_{i}-\epsilon_{i})x_{i}-\sum\limits_{j=1}^n\mu_{j}y_{j}),
\end{array}
$$
which together with (\ref{eeform}) gives
\begin{equation}\label{e33}
3\rho_{i}+\epsilon_{i}=0\,(1\leqslant i \leqslant n),
\end{equation}
\begin{equation}\label{e34}
\mu_{j}=0\,(1\leqslant j \leqslant n).
\end{equation}
Similarly, by (\ref{hhform}) and (\ref{ffform}) we obtain
$$
\setlength{\arraycolsep}{1.5pt}
\begin{array}{rcl}
-2\varphi_{1}(f)&=&\varphi_{1}([h,f])=\frac 12([\varphi_{1}(h),f]+[h,\varphi_{1}(f)])\\
&=&\frac 12([\sum\limits_{i=1}^n\gamma_{i}x_{i}+\sum\limits_{j=1}^n\epsilon_{j}y_{j},f]+[h,\sum\limits_{i=1}^n\lambda_{i}x_{i}+\sum\limits_{j=1}^n\theta_{j}y_{j}])\\
&=&\frac12 (\sum\limits_{i=1}^n\lambda_{i}x_{i}-\sum\limits_{j=1}^n(\theta_{j}+\gamma_{j})y_{j}),
\end{array}
$$
which together with (\ref{ffform}) implies
\begin{equation}\label{e35}
\lambda_{i}=0\,(1\leqslant i \leqslant n),
\end{equation}
\begin{equation}\label{e36}
3\theta_{j}=\gamma_{j}\,(1\leqslant j \leqslant n).
\end{equation}
Now by (\ref{eeform}) and (\ref{ffform}) we have
$$
\setlength{\arraycolsep}{1.5pt}
\begin{array}{rcl}
\varphi_{1}(h)&=&\varphi_{1}([e,f])=\frac 12([\varphi_{1}(e),f]+[e,\varphi_{1}(f)])\\
&=&\frac 12([\sum\limits_{i=1}^n\rho_{i}x_{i}+\sum\limits_{j=1}^n\mu_{j}y_{j},f]+[e,\sum\limits_{i=1}^n\lambda_{i}x_{i}+\sum\limits_{j=1}^n\theta_{j}y_{j}])\\
&=&\frac12 (\sum\limits_{j=1}^n\theta_{j}x_{j}-\sum\limits_{i=1}^n\rho_{i}y_{i}),
\end{array}
$$
and furthermore by (\ref{hhform}) one has
\begin{equation}\label{e37}
\theta_{j}=2\gamma_{j}\,(1\leqslant j \leqslant n),
\end{equation}
\begin{equation}\label{e38}
\rho_{i}+2\epsilon_{i}=0\,(1\leqslant i \leqslant n).
\end{equation}
Substituting (\ref{e36}) and (\ref{e33}) into (\ref{e37}) and (\ref{e38}) respectively, we have
\begin{equation}\label{e39}
\theta_{j}=\gamma_{j}=0\,(1\leqslant j \leqslant n),
\end{equation}
\begin{equation}\label{e40}
\rho_{i}=\epsilon_{i}=0\,(1\leqslant i \leqslant n).
\end{equation}
It follows by (\ref{e40}), (\ref{e34}) that
\begin{equation}\label{eeform1}
\varphi_{1}(e)=0,
\end{equation}
by (\ref{e35}), (\ref{e39}) that
\begin{equation}\label{ffform1}
\varphi_{1}(f)=0,
\end{equation}
and by (\ref{e39}), (\ref{e40}) that
\begin{equation}\label{hhform1}
\varphi_{1}(h)=0.
\end{equation}

 {\bf Step 2.}   By (\ref{hhform1}) and (\ref{ssform}) we have
$$
\begin{array}{rcl}
0&=&\varphi_{1}([h,s_{kl}])=\frac 12[h,\varphi_{1}(s_{kl})]\\
&=&\frac 12[h,\sum\limits_{i=1}^n\omega_{i}^{kl}x_{i}+\sum\limits_{j=1}^n\upsilon_{j}^{kl}y_{j}]\\
&=&\frac 12(\sum\limits_{i=1}^n\omega_{i}^{kl}x_{i}-\sum\limits_{j=1}^n\upsilon_{j}^{kl}y_{j}),
\end{array}
$$
which gives
\begin{equation}\label{ssform1}
\begin{array}{c}
\varphi_{1}(s_{kl})=0.
\end{array}
\end{equation}

{\bf Step 3.} By (\ref{hhform1}) and (\ref{xixiform}) we have
$$
\setlength{\arraycolsep}{1.5pt}
\begin{array}{rcl}
\varphi_{1}(x_{i})&=&\varphi_{1}([h,x_{i}])=\frac 12[h,\varphi_{1}(x_{i})]\\
&=&\frac 12[h,\alpha_{1}^{i}e+\alpha_{2}^{i}h+\alpha_{3}^{i}f+\alpha_{4}^{i}z+\sum\limits_{1\leqslant k<l \leqslant n}\alpha_{kl}^{i}s_{kl}]\\
&=&\alpha_{1}^{i}e-\alpha_{3}^{i}f,
\end{array}
$$
which together with (\ref{xixiform}) implies
\begin{equation}\label{e41}
\begin{array}{c}
\alpha_{2}^{i}=\alpha_{3}^{i}=\alpha_{4}^{i}=\alpha_{kl}^{i}=0\,(k<l).
\end{array}
\end{equation}
Similarly, by (\ref{hhform1}) and (\ref{yiyiform}) we obtain
$$
\setlength{\arraycolsep}{1.5pt}
\begin{array}{rcl}
-\varphi_{1}(y_{i})&=&\varphi_{1}([h,y_{i}])=\frac 12[h,\varphi_{1}(y_{i})]\\
&=&\frac 12[h,\beta_{1}^{i}e+\beta_{2}^{i}h+\beta_{3}^{i}f+\beta_{4}^{i}z+\sum\limits_{1\leqslant k<l \leqslant n}\beta_{kl}^{i}s_{kl}]\\
&=&\beta_{1}^{i}e-\beta_{3}^{i}f,
\end{array}
$$
which together with (\ref{yiyiform}) gives
\begin{equation}\label{e42}
\begin{array}{c}
\beta_{1}^{i}=\beta_{2}^{i}=\beta_{4}^{i}=\beta_{kl}^{i}=0\,(k<l).
\end{array}
\end{equation}
Furthermore by (\ref{eeform1}) and (\ref{yiyiform}) one has
$$
\setlength{\arraycolsep}{1.5pt}
\begin{array}{rcl}
\varphi_{1}(x_{i})&=&\varphi_{1}([e,y_{i}])=\frac 12[e,\varphi_{1}(y_{i})]\\
&=&\frac 12[e,\beta_{1}^{i}e+\beta_{2}^{i}h+\beta_{3}^{i}f+\beta_{4}^{i}z+\sum\limits_{1\leqslant k<l \leqslant n}\beta_{kl}^{i}s_{kl}]\\
&=&\frac 12(-2\beta_{2}^{i}e+\beta_{3}^{i}h),
\end{array}
$$
and further by (\ref{xixiform}) one can get
\begin{equation}\label{e43}
\alpha_{1}^{i}=-\beta_{2}^{i},
\end{equation}
\begin{equation}\label{e44}
2\alpha_{2}^{i}=\beta_{3}^{i}.
\end{equation}
It follows by (\ref{e41}), (\ref{e42}), (\ref{e43}) that
\begin{equation}\label{xixiform1}
\begin{array}{c}
\varphi_{1}(x_{i})=0,
\end{array}
\end{equation}
and by (\ref{e41}), (\ref{e42}), (\ref{e44}) that
\begin{equation}\label{yiyiform1}
\varphi_{1}(y_{i})=0.
\end{equation}
Then using (\ref{xixiform1}) and (\ref{yiyiform1}) we get
\begin{equation}\label{zzform}
\varphi_{1}(z)=\varphi_{1}([x_{i},y_{i}])=\frac12 ([\varphi_{1}(x_{i}),y_{i}]+[x_{i},\varphi_{1}(y_{i})])=0.
\end{equation}

Summarizing the results of (\ref{eeform1}), (\ref{ffform1}),  (\ref{hhform1}), (\ref{ssform1}), (\ref{xixiform1}), (\ref{yiyiform1}) and (\ref{zzform}), we see that
$\varphi_{1}=0$. The proof is completed.
\end{proof}

 \begin{lemma}\label{lma3}
There are no non-trivial $\frac 12$-derivations of the Schr\"{o}dinger algebra $\mathcal{S}_{n}$ for $n \neq 2$.
\end{lemma}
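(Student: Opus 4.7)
The plan is to reduce the statement to a direct combination of three already-established facts: the $n=1$ result referenced from \cite[Corollary 9]{fml}, Lemma \ref{12} for even-degree $\frac{1}{2}$-derivations, and Lemma \ref{13} for odd-degree $\frac{1}{2}$-derivations, with the glue being the grading decomposition provided by Lemma \ref{l01}. So the only real task is to check that the hypotheses of these lemmas cover every $\frac{1}{2}$-derivation of $\mathcal{S}_{n}$ for $n \neq 2$.

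First I would dispose of the case $n=1$ by recalling that $\mathcal{S}_1$ is the semidirect product $\mathfrak{sl}_2 \ltimes \mathfrak{h}_1$ and invoking the preliminary remark (with \cite[Corollary 9]{fml}) that all $\frac 12$-derivations of $\mathcal{S}_1$ are trivial. So the substantive case is $n\geq 3$.

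For $n\geq 3$, the key structural input is Lemma \ref{grad111}, which tells us that $\mathcal{S}_n$ is a finitely generated $\mathbb{Z}_2$-graded Lie algebra with $(\mathcal{S}_n)_{\bar 0}$ and $(\mathcal{S}_n)_{\bar 1}$ as in \eqref{Grad1}. Applying Lemma \ref{l01} with $G=\mathbb{Z}_2$, we obtain the decomposition
\begin{equation*}
\triangle(\mathcal{S}_n) \;=\; \triangle_{\bar 0}(\mathcal{S}_n) \oplus \triangle_{\bar 1}(\mathcal{S}_n).
\end{equation*}
So an arbitrary $\frac 12$-derivation $\varphi$ of $\mathcal{S}_n$ can be uniquely written as $\varphi = \varphi_0 + \varphi_1$ where $\varphi_0$ has degree $\bar 0$ and $\varphi_1$ has degree $\bar 1$. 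By definition of degree, $\varphi_0$ preserves the $\mathbb{Z}_2$-grading while $\varphi_1$ reverses it, which are precisely the hypotheses of Lemmas \ref{12} and \ref{13}, respectively.

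Therefore, applying Lemma \ref{12} to $\varphi_0$ yields that $\varphi_0 = \theta\,\mathrm{id}_{\mathcal{S}_n}$ for some scalar $\theta\in\mathbb{C}$, and applying Lemma \ref{13} to $\varphi_1$ yields $\varphi_1 = 0$. Consequently $\varphi = \theta\,\mathrm{id}_{\mathcal{S}_n}$ is a trivial $\frac 12$-derivation, completing the proof. There is no real obstacle in this step; the entire difficulty of the result is packaged inside Lemmas \ref{12} and \ref{13}, and the role of this lemma is just to stitch them together via the grading argument of Lemma \ref{l01}.
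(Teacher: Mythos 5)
Your proposal is correct and follows exactly the same route as the paper's own proof: dispose of $n=1$ via \cite[Corollary 9]{fml}, then for $n\geq 3$ use Lemma \ref{l01} with $G=\mathbb{Z}_2$ (justified by Lemma \ref{grad111}) to split $\varphi=\varphi_0+\varphi_1$ into graded components, and conclude by Lemmas \ref{12} and \ref{13}. Nothing is missing.
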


\begin{proof}  The result for $n=1$ is given by \cite{fml}. So below we assume that $n\geq 3$.

Let $\varphi$ be a $\frac 12$-derivation of $\mathcal{S}_{n}$. As $\mathcal{S}_{n}$ is a finitely generated $\mathbb{Z}_{2}$-graded Lie algebra, by Lemma \ref{l01}, we know that
$$\varphi=\varphi_0+ \varphi_1$$ which satisfying
$$
\varphi_{0}((\mathcal{S}_{n})_{\bar{0}})\subseteq(\mathcal{S}_{n})_{\bar{0}}, \quad \varphi_{0}((\mathcal{S}_{n})_{\bar{1}})\subseteq(\mathcal{S}_{n})_{\bar{1}}, \quad
\varphi_{1}((\mathcal{S}_{n})_{\bar{0}})\subseteq(\mathcal{S}_{n})_{\bar{1}}, \quad \varphi_{1}((\mathcal{S}_{n})_{\bar{1}})\subseteq(\mathcal{S}_{n})_{\bar{0}},
$$
where $\varphi_0, \varphi_1$ are both graded $\frac 12$-derivations  of $\mathcal{S}_{n}$ and $(\mathcal{S}_{n})_{\bar{0}}, (\mathcal{S}_{n})_{\bar{1}}$ are given by (\ref{Grad}).
From Lemmas \ref{12} and \ref{13} we see that $\varphi_0$ is trivial and $\varphi_1=0$ respectively. Therefore, one has $\varphi=\varphi_0+ \varphi_1=\varphi_0$, i.e., $\varphi$ is trivial. The proof is completed.
\end{proof}

For $n=2$, one can apply a similar method to that used to prove Lemmas \ref{12},\ref{13} and \ref{lma3},
where only the formula (\ref{sform2}) cannot be obtained because $n=2$ so that $s_{ij}, s_{ik}$ cannot be found from $s_{kj}$, and then we obtain the following lemma.

\begin{lemma} \label{lmtang}
$\varphi$ is a $\frac 1 2$-derivation of the Schr\"{o}dinger algebra $S_2$ if and only if there are $\theta, \beta\in \mathbb{C}$ such that
 \begin{equation}\label{dern=2}
 \begin{array}{llllll}
 \varphi(e) = \theta e, & \varphi(f) = \theta f, & \varphi(h) = \theta h, \\[1mm]
\varphi(x_i) = \theta x_i, & \varphi(y_i) = \theta y_i, &  1 \leq i \leq 2, \\[1mm]
 \varphi(z) = \theta z,& \varphi(s_{12}) = \theta s_{12} + \beta z.
 \end{array}
 \end{equation}
 That is, every $\frac 1 2$-derivation $\varphi$ of the Schr\"{o}dinger algebra $S_2$  is of the form $$\varphi=\theta {\rm{id}}_{S_2} +\beta \Re$$ for some $\theta, \beta\in \mathbb{C}$, where $\Re$ is given by (\ref{ders2}).
 \end{lemma}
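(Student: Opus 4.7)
The plan is to adapt the proof strategy of Lemmas \ref{12}, \ref{13}, and \ref{lma3} to the case $n=2$. By Lemma \ref{grad111}, the algebra $\mathcal{S}_{2}$ is finitely generated and $\mathbb{Z}_{2}$-graded, so Lemma \ref{l01} lets us decompose any $\frac{1}{2}$-derivation as $\varphi=\varphi_{0}+\varphi_{1}$, where $\varphi_{0}$ preserves the grading and $\varphi_{1}$ reverses it. The treatment of $\varphi_{1}$ is a verbatim repetition of Lemma \ref{13}: the computations there exploit the brackets $[h,e],\,[h,f],\,[e,f],\,[h,x_{i}],\,[h,y_{i}],\,[e,y_{i}],\,[h,s_{12}]$, and $[x_{i},y_{i}]$, none of which use multi-index $s_{jk}$ generators, so they yield $\varphi_{1}=0$ for $n=2$ as well.

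For the even part $\varphi_{0}$, I would recycle Steps 1--4 of Lemma \ref{12} without modification. These steps only invoke the $\mathfrak{sl}_{2}$-action on the Heisenberg part and the bracket $[x_{i},y_{i}]=z$, and they produce a single scalar $\theta=\tfrac{1}{2}(\alpha_{11}+\epsilon_{11})$ such that
\begin{equation*}
\varphi_{0}(z)=\theta z,\quad \varphi_{0}(x_{i})=\theta x_{i},\quad \varphi_{0}(y_{i})=\theta y_{i},\quad \varphi_{0}(e)=\theta e,\quad \varphi_{0}(f)=\theta f,\quad \varphi_{0}(h)=\theta h.
\end{equation*}
The only remaining task is to compute $\varphi_{0}(s_{12})$. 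Write $\varphi_{0}(s_{12})=\sigma_{1}e+\sigma_{2}h+\sigma_{3}f+\sigma_{4}z+\sigma_{12}s_{12}$ and apply $\varphi_{0}$ to the four identities $[s_{12},x_{1}]=-x_{2}$, $[s_{12},x_{2}]=x_{1}$, $[s_{12},y_{1}]=-y_{2}$, $[s_{12},y_{2}]=y_{1}$. As in Step 5 of Lemma \ref{12}, comparing coefficients of $x_{i},y_{i}$ forces $\sigma_{1}=\sigma_{2}=\sigma_{3}=0$ and $\sigma_{12}=\theta$, while the central term $\sigma_{4}z$ is left completely untouched because $[z,\,\cdot\,]=0$.

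The key difference from the $n\geq 3$ case, and the only genuine obstacle, is precisely that the relation $[s_{ij},s_{ik}]=s_{kj}$ used in the derivation of (\ref{sform2}) requires three distinct indices and is therefore unavailable in $\mathcal{S}_{2}$. Consequently the coefficient $\sigma_{4}$ cannot be eliminated, and writing $\beta:=\sigma_{4}$ gives $\varphi_{0}(s_{12})=\theta s_{12}+\beta z$. Combining everything, $\varphi=\varphi_{0}=\theta\,\mathrm{id}_{\mathcal{S}_{2}}+\beta\,\Re$. Conversely, $\mathrm{id}_{\mathcal{S}_{2}}$ is trivially a $\frac{1}{2}$-derivation, and one checks directly (as stated in the remark before Definition \ref{ders2}) that $\Re$ satisfies the $\frac{1}{2}$-Leibniz rule, since $\Re$ vanishes on $\mathfrak{sl}_{2}\oplus\mathfrak{h}_{n}$ and the image $z$ is central. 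Hence every linear combination $\theta\,\mathrm{id}_{\mathcal{S}_{2}}+\beta\,\Re$ is a $\frac{1}{2}$-derivation, completing the characterization.
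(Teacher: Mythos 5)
Your proposal is correct and follows essentially the same route the paper takes: the paper's own justification of Lemma \ref{lmtang} is precisely the remark that one reruns Lemmas \ref{12}, \ref{13} and \ref{lma3} for $n=2$ and that only \eqref{sform2} fails because the relation $[s_{ij},s_{ik}]=s_{kj}$ needs three distinct indices, leaving the central coefficient of $\varphi_0(s_{12})$ free. Your write-up simply makes this explicit (including the verification that $\Re$ is a $\frac12$-derivation), and the details check out.
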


  By Lemmas \ref{lma3} and \ref{lmtang} one has the following theorem on $\frac 1 2$-derivation of  $S_n$.

 \begin{theorem} \label{th}
Every $\frac 1 2$-derivation $\varphi$ of the Schr\"{o}dinger algebra $S_n$  is of the form $$\varphi=\theta {\rm{id}}_{S_n} +\delta_{2n}\beta \Re$$ for some $\theta, \beta\in \mathbb{C}$, where $\Re$ is given by (\ref{ders2}).  In other words, we have
$$\triangle (S_n)={\rm Span}_{\mathbb{C}}\{{\rm id}_{S_n}, \delta_{2n}\Re\},$$
which implies that the dimensions of the space $\triangle (S_n)$ are
$\dim \triangle (S_n)=1$ if $n\neq 2$ and  $\dim \triangle (S_2)=2$.
 \end{theorem}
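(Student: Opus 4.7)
The plan is to obtain the theorem as an immediate synthesis of the two preceding lemmas (Lemma \ref{lma3} and Lemma \ref{lmtang}), using the Kronecker delta $\delta_{2n}$ only as a uniform notational device to merge the two cases. First I would split the argument according to whether $n=2$ or $n\neq 2$.

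For $n\neq 2$ (that is, $n=1$ or $n\geq 3$), Lemma \ref{lma3} asserts that every $\frac 12$-derivation of $\mathcal{S}_n$ is trivial, so there exists $\theta\in\mathbb{C}$ with $\varphi=\theta\,{\rm id}_{\mathcal{S}_n}$. Since $\delta_{2n}=0$ in this range, the formula $\varphi=\theta\,{\rm id}_{\mathcal{S}_n}+\delta_{2n}\beta\Re$ is satisfied for any $\beta\in\mathbb{C}$; in particular, $\triangle(\mathcal{S}_n)=\mathbb{C}\cdot{\rm id}_{\mathcal{S}_n}$ and hence $\dim\triangle(\mathcal{S}_n)=1$.

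For $n=2$, Lemma \ref{lmtang} gives directly $\varphi=\theta\,{\rm id}_{\mathcal{S}_2}+\beta\Re$ for some $\theta,\beta\in\mathbb{C}$, which matches the unified expression because $\delta_{22}=1$. To conclude the dimension count $\dim\triangle(\mathcal{S}_2)=2$, I would verify the linear independence of ${\rm id}_{\mathcal{S}_2}$ and $\Re$: if $a\,{\rm id}_{\mathcal{S}_2}+b\Re=0$, evaluating at $e$ gives $ae=0$, so $a=0$; and then evaluating at $s_{12}$ gives $b\,\Re(s_{12})=bz=0$, so $b=0$. Therefore $\{{\rm id}_{\mathcal{S}_2},\Re\}$ is a basis of $\triangle(\mathcal{S}_2)$.

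The only subtlety worth flagging is the uniform expression involving $\delta_{2n}$: in the $n\neq 2$ case, the coefficient $\beta$ is not uniquely determined by $\varphi$, which is harmless because the term $\delta_{2n}\beta\Re$ vanishes identically there. Apart from this bookkeeping, there is essentially no obstacle — the theorem is a direct corollary of the two lemmas already established, combined with a trivial independence check when $n=2$.
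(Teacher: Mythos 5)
Your proposal is correct and follows exactly the route the paper takes: the theorem is stated there as an immediate consequence of Lemmas \ref{lma3} and \ref{lmtang}, split by the cases $n\neq 2$ and $n=2$. Your added linear-independence check of ${\rm id}_{\mathcal{S}_2}$ and $\Re$ and the remark on the non-uniqueness of $\beta$ when $n\neq 2$ are harmless elaborations of the same argument.
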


 We now give the following main result in this paper.
\begin{theorem}
If $n\neq 2$, then all the transposed Poisson algebra structures on $(\mathcal{S}_{n}, [\cdot, \cdot])$ are trivial. If $n=2$, then, up to an isomorphism,
there is only one non-trivial transposed Poisson structure $\cdot$ on $(S_2,[\cdot,\cdot])$ given by
$$ s_{12}\cdot s_{12}= z.$$
\end{theorem}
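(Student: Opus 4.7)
The case $n\ne 2$ is immediate: by Theorem~\ref{th}, $\triangle(\mathcal{S}_n)=\mathbb{C}\,\mathrm{id}_{\mathcal{S}_n}$, so $\mathcal{S}_n$ admits no non-trivial $\frac12$-derivation, and Lemma~\ref{t1} then forces every transposed Poisson structure on $\mathcal{S}_n$ to be trivial.

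For $n=2$, let $\cdot$ be a transposed Poisson multiplication on $\mathcal{S}_2$. By Lemma~\ref{l1}, for each $w\in\mathcal{S}_2$ the left multiplication $l_w:v\mapsto w\cdot v$ is a $\frac12$-derivation of $(\mathcal{S}_2,[\cdot,\cdot])$. Applying Theorem~\ref{th}, one finds scalars $\theta(w),\beta(w)\in\mathbb{C}$ with $w\cdot v=\theta(w)\,v+\beta(w)\,\Re(v)$ for every $v\in\mathcal{S}_2$, and bilinearity of $\cdot$ forces $\theta$ and $\beta$ to be linear functionals on $\mathcal{S}_2$.

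The next step is to extract $\theta$ and $\beta$ from the commutativity $w\cdot v=v\cdot w$. Recall that $\Re$ is supported on $s_{12}$ with image in $\mathbb{C} z$. For any basis element $v\notin\{s_{12},z\}$, comparing $s_{12}\cdot v=\theta(s_{12})v$ with $v\cdot s_{12}=\theta(v)s_{12}+\beta(v)z$ and invoking linear independence of $\{v,s_{12},z\}$ yields $\theta(s_{12})=\theta(v)=\beta(v)=0$. Testing the remaining cases $v=z$ and $v=s_{12}$ against $w=s_{12}$ similarly gives $\theta(z)=\beta(z)=0$. Hence $\theta\equiv 0$ on $\mathcal{S}_2$, and $\beta$ vanishes on every basis vector except possibly $s_{12}$. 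Setting $c=\beta(s_{12})$, the multiplication is determined by $s_{12}\cdot s_{12}=cz$ together with the vanishing of every other product of basis vectors.

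Finally, associativity is a short check: the only potentially non-zero triple product involves two copies of $s_{12}$, but then one factor becomes $z$ and $z\cdot(\,\cdot\,)=0$ by the formula just obtained. To conclude uniqueness up to isomorphism when $c\neq 0$, I would introduce the Lie algebra automorphism $\phi$ of $\mathcal{S}_2$ which is the identity on $e,f,h,s_{12}$ and acts by $\phi(x_i)=\alpha x_i$, $\phi(y_i)=\alpha y_i$, $\phi(z)=\alpha^{2} z$; direct verification against the brackets in Remark~\ref{rmk2} confirms all relations are preserved. Transporting $\cdot$ along $\phi$ sends $s_{12}\cdot s_{12}=cz$ to $s_{12}\star s_{12}=c\alpha^{2}z$, so choosing $\alpha^{2}=c^{-1}$ normalizes $c$ to $1$. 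The only delicate portion of the argument is the commutativity bookkeeping in the third paragraph, which is compact but relies essentially on the explicit action of $\Re$ on the basis.
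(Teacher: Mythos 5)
Your proof is correct and follows the same route as the paper, whose own proof of this theorem is a one-line appeal to Lemmas \ref{l1}, \ref{t1} and Theorem \ref{th}; you have simply filled in the details that the authors leave implicit (the commutativity bookkeeping that kills $\theta$ and restricts $\beta$ to $s_{12}$, the associativity check, and the rescaling automorphism that normalizes $s_{12}\cdot s_{12}=cz$ to $c=1$). The only point worth adding for completeness is the remark that the resulting product does satisfy the compatibility condition because each left multiplication $l_w=\beta(w)\Re$ is a $\frac12$-derivation, but this is immediate from the construction.
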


\begin{proof}
By Lemmas \ref{l1}, \ref{t1} and Theorem \ref{th}, the statement follows.
\end{proof}

Let us recall the definition of ${\rm Hom}$-structures on Lie algebras.
	\begin{definition}
		Let $({\mathcal {L}}, [\cdot,\cdot])$ be a Lie algebra and $\varphi$ be a linear map.
		Then $({\mathcal {L}}, [\cdot,\cdot], \varphi)$ is a ${\rm Hom}$-Lie structure on $({\mathcal {L}}, [\cdot,\cdot])$ if
		\[
		[\varphi(x),[y,z]]+[\varphi(y),[z,y]]+[\varphi(z),[x,y]]=0.
		\]
	\end{definition}

Filippov proved that each nonzero $\delta$-derivation ($\delta\neq 0,1$) of a Lie algebra,
 gives a non-trivial ${\rm Hom}$-Lie algebra structure \cite[Theorem 1]{fil1}.
 Hence, by \eqref{dern=2}, we have the following corollary.

 \begin{corollary}
The Schr\"{o}dinger algebra $\mathcal{S}_{2}$   admits a non-trivial ${\rm Hom}$-Lie algebra structure.
 \end{corollary}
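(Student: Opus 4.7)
The strategy is to deduce the corollary immediately from Filippov's theorem (cited as \cite[Theorem 1]{fil1}) combined with the existence of the non-trivial $\frac{1}{2}$-derivation $\Re$ exhibited in \eqref{ders2} and confirmed by Lemma \ref{lmtang}. Since $\delta = \frac{1}{2}$ satisfies $\delta \notin \{0, 1\}$, any nonzero $\frac{1}{2}$-derivation automatically yields a non-trivial Hom-Lie structure on the underlying Lie algebra; all I need to do is display a nonzero $\frac{1}{2}$-derivation and certify that the resulting Hom-Lie bracket is genuinely non-trivial.

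First I would set $\varphi := \Re$, the linear map on $\mathcal{S}_2$ defined by $\Re(s_{12}) = z$ and $\Re(u) = 0$ for every other basis element of $\mathcal{S}_2$. By Lemma \ref{lmtang}, this $\Re$ is a legitimate $\frac{1}{2}$-derivation (corresponding to the parameters $\theta = 0$, $\beta = 1$). I would then invoke Filippov's theorem to conclude that $(\mathcal{S}_2, [\cdot,\cdot], \Re)$ is a Hom-Lie algebra. For completeness in the write-up, I would recall the one-line reason Filippov's result works with $\delta = \tfrac{1}{2}$: applying a $\frac{1}{2}$-derivation $\varphi$ to the Jacobi identity and then using Jacobi again on each triple $\{\varphi(x), y, z\}$, one checks that $\sum_{\mathrm{cyc}} [\varphi(x), [y,z]]$ equals half of itself, hence vanishes; this is exactly the Hom-Jacobi identity.

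The remaining point is non-triviality of the Hom-Lie structure, which simply means that $\Re$ is not a scalar multiple of $\mathrm{id}_{\mathcal{S}_2}$. This is immediate from the definition: $\Re(s_{12}) = z$ is nonzero while $\Re(e) = 0$, so no scalar $c$ can satisfy both $\Re(s_{12}) = c\, s_{12}$ and $\Re(e) = c\, e$. Therefore $\Re$ is a nonzero, non-scalar $\frac{1}{2}$-derivation, and the Hom-Lie algebra $(\mathcal{S}_2, [\cdot,\cdot], \Re)$ produced above is non-trivial.

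There is no substantive obstacle, since the heavy lifting was done in Lemma \ref{lmtang} (which guaranteed $\Re$ is a $\frac{1}{2}$-derivation) and in Filippov's theorem (which supplies the Hom-Jacobi verification); the corollary is essentially a one-line assembly. If a self-contained proof is preferred instead of quoting \cite{fil1}, the only routine step would be to reproduce the short cyclic-sum computation sketched above to confirm the Hom-Jacobi identity for $\varphi = \Re$.
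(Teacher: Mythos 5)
Your proposal is correct and follows exactly the paper's route: the paper likewise obtains the corollary by combining Filippov's theorem on $\delta$-derivations ($\delta\neq 0,1$) with the non-trivial $\frac12$-derivation $\Re$ guaranteed by Lemma \ref{lmtang} and \eqref{dern=2}. Your added sketch of the Hom--Jacobi verification and the explicit check that $\Re$ is not a scalar multiple of $\mathrm{id}_{\mathcal{S}_2}$ are correct supplementary details, not a different argument.
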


\section*{Funding}

This work is supported in part by NSF of China (No. 12271085), NSF of Heilongjiang Province (No. LH2020A020) and the fund of Heilongjiang Provincial Laboratory of the Theory and Computation of Complex Systems.

\section*{Data availability}

No data was used for the research described in the article.

\end{document}